\def\pref#1{(\ref{#1})}
\def\p@enumii{}
\newtheorem{thm}{Theorem}[section]
\newtheorem{lemma}[thm]{Lemma}
\newtheorem{proposition}[thm]{Proposition}
\theoremstyle{definition}
\newtheorem{example}[thm]{Example}
\theoremstyle{remark}
\numberwithin{equation}{section}
\newcommand{\chr}{\mathrm{char}\,}
\newcommand{\G}{\mathcal{G}}
\newcommand{\inv}{\mathrm{Inv}}
\begin{document}
\title{Toroidal involutory Cayley graphs}

\author{Hamide Keshavarzi$^1$, Babak Amini$^{2}$, Afshin Amini$^3$ and Shahin Rahimi$^4$\\ 
\it\small Department of Mathematics, College of Sciences, Shiraz University, \\
\it\small 71457-44776, Shiraz, Iran\\
\it\small $^1$E-mail: hamide2326@hafez.shirazu.ac.ir\\
\it\small $^2$E-mail: bamini@shirazu.ac.ir\\
\it\small $^3$E-mail: aamini@shirazu.ac.ir\\
\it\small $^4$E-mail: shahin.rahimi.math@gmail.com}
\date{}

\maketitle
\begin{abstract}
Suppose that $R$ is a finite commutative ring with identity. The involutory Cayley graph $\G(R)$ of $R$ is the graph whose vertices are the elements of $R$, and two distinct vertices $x$ and $y$ are adjacent if and only if $(x-y)^2=1$. In this paper, we classify all rings $R$ for which $\G(R)$ is a toroidal graph, that is, a graph that can be embedded on a torus.
\end{abstract}

Keywords: Cayley graph, Involution, Toroidal, Genus. \\
\indent 2020 Mathematical Subject Classification:  	05C25, 13M99, 05C69

\section{Introduction}

Throughout this paper, all rings are commutative, finite and with a nonzero identity. Also, $R$ always denotes a ring. In addition, all considered graphs are simple and undirected.

In algebraic graph theory, Cayley graphs are a basic and extremely symmetric class of graphs. Their structural and combinatorial properties become especially interesting when constructed over rings, exposing deeper connections between algebra and graph theory, even though they have been extensively studied in the general setting of arbitrary groups. See, for example, \cite{unit1,unit2,akhtar,unit3,unit4,unit5,unit6,qunit1,qunit3,zunit1,zunit2,inv}.

Recall that an element $u$ of a ring is called \textit{involutory} or an \textit{involution}, if $u=u^{-1}$. Here, we consider the \textit{involutory Cayley graph} $\G(R)$ of a ring $R$, which is the Cayley graph of the abelian group $(R,+)$ with respect to the set $\inv(R)$ of involutory elements of $R$. In other words, the vertices of $\G(R)$ are the elements of $R$, and two vertices $x$ and $y$ are adjacent (denoted by $x\sim y$) when $(x-y)^2=1$. It can be seen that in $\G(R)$, each vertex $x$ is adjacent to $x+u$ for all $u\in \inv(R)$, implying that $\G(R)$ is a $|\inv(R)|$-regular graph with no isolated vertices. 

We studied the graph $\G(R)$ in \cite{inv}, identifying important graph parameters like its girth, chromatic number, edge chromatic number, clique number, and independence number. We also gave a classification of rings $R$ for which $\G(R)$ is either planar or self-complementary. 
In this paper, we classify all rings $R$ for which $\G(R)$ admits a toroidal embedding, extending our investigation.

Recall that a \textit{toroidal graph} is a graph that can be embedded on a torus, that is, its vertices and edges can be drawn on a torus without crossings except at their common endpoints. Any graph that can be embedded in a plane can also be embedded in a torus, so every planar graph is also a toroidal graph. A toroidal graph that cannot be embedded in a plane is said to have genus 1. More generally, the \textit{(orientable) genus} $g(G)$ of a graph $G$ is the smallest integer $n$ such that the graph can be drawn without crossing itself on a sphere with $n$ handles.

It is well-known that if $R$ is a finite commutative ring, then $R \simeq R_1\times \cdots\times R_t$, where each $R_i$ is a finite local ring, and this decomposition is unique up to a permutation of the factors (see \cite[Theorem 8.7]{ati}).  Moreover,
if $R$ is a finite commutative local ring with maximal ideal $M$, then by considering the chain $R\supseteq M \supseteq M^2 \supseteq \cdots$, it can be seen that  $|R|$, $|M|$ and $\chr(R)$ are all powers of some prime number $p$ (see, for example \cite[Proposition 2.1]{akhtar}). Furthermore, in such a ring $R$, the element $p$ is nilpotent, and any prime number $q\ne p$ is a unit.

Recall that the \textit{direct product} (also known as the conjunction product or Kronecker product) of two graphs $G$ and $H$, denoted by $G\otimes H$, has the vertex set $V(G)\times V(H)$, where two vertices $(g_1,h_1)$ and $(g_2,h_2)$ are adjacent in $G\otimes H$ if and only if $g_1$ is adjacent to $g_2$ in $G$ and $h_1$ is adjacent to $h_2$ in $H$. For a comprehensive background on the genus of graphs, see the book \cite{white}; for a discussion of some well-known graphs and related results, we refer to the survey \cite{sur}. Furthermore, let $C_n$ denote a cycle graph of length $n$. The genus of the direct product $C_m\otimes C_n$ is equal to $2$ if both $m$ and $n$ are even integers, otherwise it is $1$ (see \cite[Corollary 3]{conj}). 


\section{Toroidal involutory Cayley graphs}

In this section, we first recall some essential results from \cite{inv}. Then, we classify all rings $R$ for which $\G(R)$ is of genus $1$. To begin, we recall the following theorem which classifies planar involutory Cayley graphs, completing  the genus zero case. It is noteworthy that for a ring $R$, the graph $\G(R)$ is planar if and only if $\G(R)$ is $1$-regular or $2$-regular (see \cite[Proposition 6.2]{inv}).

\begin{thm}\label{planar}\cite[Theorem 6.8]{inv}
	The involutory Cayley graph of a ring $R$ is planar if and only if $R\simeq S$, $R\simeq T$, or $R\simeq S\times T$, where $S$ is a direct product of fields of characteristic $2$ and $T$ is either a local ring of odd characteristic or it is isomorphic to $\mathbb{Z}_4$, $\mathbb{Z}_2[x]/\langle x^2\rangle$,  $\mathbb{Z}_2[x]/\langle x^3 \rangle$, or $\mathbb{Z}_4[x]/\langle 2x, x^2-2\rangle$.
\end{thm}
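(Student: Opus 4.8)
The plan is to reduce the planarity of $\G(R)$ to a purely arithmetic condition on the number of involutions, and then to read off the admissible rings from the structure theory of finite commutative rings. By the cited \cite[Proposition 6.2]{inv}, $\G(R)$ is planar exactly when it is $1$- or $2$-regular, and since $\G(R)$ is $|\inv(R)|$-regular this means $|\inv(R)|\le 2$. The first observation I would record is that $\inv(R)=\{x\in R: x^2=1\}$ is a subgroup of the unit group in which every element has order dividing $2$; hence $|\inv(R)|$ is always a power of $2$, and the condition $|\inv(R)|\le 2$ is equivalent to $|\inv(R)|\in\{1,2\}$. Writing $R\simeq R_1\times\cdots\times R_t$ as a product of finite local rings, one has $\inv(R)\simeq \inv(R_1)\times\cdots\times\inv(R_t)$ and therefore $|\inv(R)|=\prod_{i=1}^{t}|\inv(R_i)|$. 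Since each factor contributes a power of $2$ that is at least $1$, the product is at most $2$ precisely when every local factor satisfies $|\inv(R_i)|=1$, with at most one exception satisfying $|\inv(R_i)|=2$.

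The next step is to identify the local factors by their involution count. If $(R_i,M_i)$ has odd characteristic then $2$ is a unit, so $x^2=1$ forces $(x-1)(x+1)=0$ with exactly one factor lying in $M_i$; multiplying by the inverse of the unit factor gives $\inv(R_i)=\{1,-1\}$ and $|\inv(R_i)|=2$. If $R_i$ is a field of characteristic $2$ then $x^2=1$ has the unique solution $x=1$, so $|\inv(R_i)|=1$. In the remaining case the residue field has characteristic $2$, so every involution is congruent to $1$ modulo $M_i$; setting $a=x-1\in M_i$ turns $x^2=1$ into $a^2+2a=0$, and $|\inv(R_i)|$ equals the number of solutions $a\in M_i$ of this equation. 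Moreover a field of characteristic $2$ is the only local ring with $|\inv(R_i)|=1$: in any non-field local ring of characteristic a power of $2$, each element $a$ of the socle $M_i^{s}$ (where $M_i^{s+1}=0$) satisfies $a^2\in M_i^{2s}=0$ and $2a\in M_i^{s+1}=0$, hence $a^2+2a=0$, so the whole socle translates into involutions and $|\inv(R_i)|\ge 2$.

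The heart of the argument, and the step I expect to be the main obstacle, is the classification of the non-field local rings of characteristic a power of $2$ with exactly two involutions, i.e. with exactly two solutions of $a^2+2a=0$ in $M_i$. I would first show the residue field must be $\mathbb{F}_2$: since the socle $M_i^{s}$ is a nonzero vector space over $R_i/M_i$ all of whose elements are involutions, $|R_i/M_i|\ge 4$ would already produce at least four involutions. With residue field $\mathbb{F}_2$ fixed, a one-dimensional socle forces the graded pieces $M_i^{j}/M_i^{j+1}$ to have $\mathbb{F}_2$-dimension at most $1$ in the relevant range and $M_i^{3}=0$, confining $R_i$ to order at most $8$. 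A finite inspection of the local rings of characteristic a power of $2$, residue field $\mathbb{F}_2$, and order at most $8$ then leaves exactly $\mathbb{Z}_4$, $\mathbb{Z}_2[x]/\langle x^2\rangle$, $\mathbb{Z}_2[x]/\langle x^3\rangle$, and $\mathbb{Z}_4[x]/\langle 2x, x^2-2\rangle$ as the rings with precisely two involutions; the last of these (where $x^2=2\ne 0$) is the delicate case that must be distinguished from $\mathbb{Z}_4[x]/\langle 2x,x^2\rangle$, which has four involutions. Finally I would reassemble the global picture: grouping the factors with $|\inv(R_i)|=1$ into a product $S$ of fields of characteristic $2$ and letting $T$ be the at-most-one factor with $|\inv(R_i)|=2$ (an odd-characteristic local ring or one of the four rings above) yields exactly the three forms $R\simeq S$, $R\simeq T$, and $R\simeq S\times T$ of the statement.
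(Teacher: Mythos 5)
Your overall framework is the natural one, and most of it is sound: reducing planarity to $|\inv(R)|\le 2$ via \cite[Proposition 6.2]{inv}, multiplicativity of $\inv$ over the local decomposition, $\inv(R_i)=\{1,-1\}$ in odd characteristic, $|\inv(R_i)|=1$ exactly for fields of characteristic $2$, and the socle argument showing every non-field local ring of $2$-power characteristic has at least two involutions are all correct. (Note the paper under review does not reprove this theorem --- it quotes it from \cite[Theorem 6.8]{inv} --- so the comparison here is purely about the soundness of your argument.)

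The gap is precisely at what you call the heart of the argument. The claim that ``a one-dimensional socle forces the graded pieces $M_i^j/M_i^{j+1}$ to have $\mathbb{F}_2$-dimension at most $1$ in the relevant range and $M_i^3=0$, confining $R_i$ to order at most $8$'' is false. The ring $\mathbb{Z}_{16}$ has residue field $\mathbb{F}_2$ and one-dimensional socle $\{0,8\}$, yet $M^3\ne 0$ and $|R|=16$; the same goes for $\mathbb{F}_2[x]/\langle x^4\rangle$, and $\mathbb{F}_2[x,y]/\langle x^2+y^2,\,xy\rangle$ even has two-dimensional $M/M^2$ with one-dimensional socle (a one-dimensional socle means Gorenstein, not chain). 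These rings all happen to have four involutions, so they do not contradict the theorem, but they do contradict your intermediate lemma: from the only facts you have established at that point (residue field $\mathbb{F}_2$ and $|\mathrm{ann}(M)|=2$) the bound $|R|\le 8$ simply does not follow, so your concluding ``finite inspection of rings of order at most $8$'' never rules out, say, $\mathbb{Z}_{32}$ or $\mathbb{F}_2[x]/\langle x^5\rangle$. To close the gap you must use the full hypothesis that $a^2+2a=0$ has exactly two solutions in $M$, not merely its consequence for the socle. One way: if $\chr R\ge 8$, the four distinct involutions of $\mathbb{Z}_{2^k}\subseteq R$ already give a contradiction, so $\chr R\in\{2,4\}$; in characteristic $4$, since $2$ and every socle element are solutions, the solution set being of size $2$ forces $\mathrm{ann}(M)=\{0,2\}$, hence $2M=0$. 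In either characteristic one then has $(a+b)^2=a^2+b^2$ for $a,b\in M$, so squaring is an additive endomorphism of $M$ whose kernel is exactly the solution set $N$ and whose image lies in $M^2$; therefore $[M:M^2]\le [M:M^{[2]}]=|N|=2$, and Nakayama makes $M$ principal. Now $R$ is a chain ring with residue field $\mathbb{F}_2$, all elements of $M^{\lceil (s+1)/2\rceil}$ (where $M^{s+1}=0\ne M^s$) are solutions, and $|N|=2$ bounds the nilpotency index by $3$; this, plus the exclusion of $x^2=0$ in the characteristic-$4$ length-$3$ case, yields exactly $\mathbb{Z}_4$, $\mathbb{Z}_2[x]/\langle x^2\rangle$, $\mathbb{Z}_2[x]/\langle x^3\rangle$, and $\mathbb{Z}_4[x]/\langle 2x, x^2-2\rangle$.
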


Now, we turn our attention to the genus one case. To proceed with our work, we require the following preparatory result from \cite{inv}.

\begin{thm}\label{imp}
For any (finite commutative) ring $R$, the following statements hold.
\begin{enumerate}
\item \label{imp 1} \cite[Remark 2.2]{inv} The graph $\G(R)$ is $2^t$-regular for some nonnegative integer $t$.

\item \label{imp 2} \cite[Proposition 2.3]{inv} If $R$ is a local ring of odd size, then $\G(R)$ is a disjoint union of $|R| / \chr(R)$ copies of cycles of length $\chr(R)$.

\item \label{imp 3} \cite[Theorem 3.1]{inv} The graph $\G(R)$ is bipartite if and only if $|R|$ is an even integer.

\item \label{imp 4} \cite[Note 2.8]{inv} If $R=R_1\times \cdots \times R_t$ is a decomposition of $R$ into local rings, then $\G(R)\simeq \G(R_1)\otimes \cdots \otimes \G(R_t)$. In particular, if $\G(R_i)$ is a $k_i$-regular graph for each $i$, then $\G(R)$ is a $k$-regular graph with $k=k_1\cdots k_t$.
\end{enumerate}
\end{thm}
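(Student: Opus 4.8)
The four assertions are largely independent, so the plan is to treat them separately, taking them in the order (i), (iv), (ii), (iii), since the identification of $\inv(R)$ in the early parts feeds the later ones. For (i), I would first observe that $\inv(R)=\{u\in R:u^2=1\}$ is not merely a set but a subgroup of the unit group: it contains $1$, it is closed under multiplication because $(uv)^2=u^2v^2=1$ in a commutative ring, and each element is its own inverse. Thus $\inv(R)$ is a finite abelian group of exponent dividing $2$, i.e. an elementary abelian $2$-group, so $|\inv(R)|=2^t$ for some $t\ge 0$. Since the introduction already records that $\G(R)$ is $|\inv(R)|$-regular, the regularity claim follows at once.

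For (iv), I would use that under the canonical identification $R=R_1\times\cdots\times R_t$ an element is an involution precisely when each coordinate is, since $(u_1,\dots,u_t)^2=1$ iff $u_i^2=1$ for all $i$. Hence for vertices $x=(x_i)$ and $y=(y_i)$ one has $(x-y)^2=1$ iff $(x_i-y_i)^2=1$ for every $i$. Because $0\notin\inv(R_i)$, the condition $(x_i-y_i)^2=1$ already forces $x_i\ne y_i$, so it is exactly adjacency in $\G(R_i)$; therefore the identity map on vertex sets is a graph isomorphism $\G(R)\simeq\G(R_1)\otimes\cdots\otimes\G(R_t)$. The regularity statement is then the standard fact that the degree of a vertex in a direct product is the product of the coordinate degrees.

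For (ii), the crucial step is to pin down $\inv(R)$ exactly. In a local ring of odd size the residue field has odd characteristic, so $2\notin M$ and $2$ is a unit. Writing $x^2=1$ as $(x-1)(x+1)=0$ and noting that $(x+1)-(x-1)=2$ is a unit, at least one of $x\pm1$ must be a unit in the local ring, which forces $x=1$ or $x=-1$; as $2\ne 0$ these are distinct, so $\inv(R)=\{1,-1\}$ and $\G(R)$ is $2$-regular. A finite $2$-regular graph is a disjoint union of cycles, and here the connection set $\{1,-1\}$ generates the additive cyclic subgroup $\langle 1\rangle$ of order $\chr(R)$; its cosets are the connected components, each carrying the structure of the cycle $C_{\chr(R)}$, and there are $|R|/\chr(R)$ of them.

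For (iii), the odd-size direction is immediate from the mechanism of (ii): if $|R|$ is odd then $\chr(R)$ is odd, and $0,1,2,\dots,\chr(R)-1$ traces an odd cycle in $\G(R)$, so the graph is not bipartite. For the converse I would manufacture an explicit bipartition when $|R|$ is even. Here evenness of $|R|$ forces some local factor $R_1$ to have characteristic a power of $2$; reducing $u_1^2=1$ modulo the maximal ideal $M_1$ and using that the residue field has characteristic $2$ gives $(\overline{u_1}-1)^2=0$, whence $\overline{u_1}=1$, so every involution of $R_1$ lies in $1+M_1$. Choosing an additive surjection $R_1/M_1\to\mathbb{F}_2$ sending $\overline 1$ to $1$ and composing with the projections yields a homomorphism $\phi\colon(R,+)\to\mathbb{F}_2$ with $\phi(u)=1$ for every $u\in\inv(R)$; the two fibres of $\phi$ are then the parts of a bipartition, since each edge joins $x$ and $x+u$ whose $\phi$-values differ by $\phi(u)=1$. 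I expect this even case of (iii) to be the main obstacle: the remaining parts reduce to short algebraic identities, whereas here one must exploit the local structure, namely the congruence $u\equiv 1\pmod{M_1}$, to produce the index-two subgroup witnessing bipartiteness.
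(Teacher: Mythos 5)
Your proofs are correct, but note that this paper never proves the statement itself: Theorem~2.2 is a list of four results imported from the authors' earlier work \cite{inv}, each given only a citation, so there is no in-paper argument to compare against. Judged on its own merits, your write-up is a sound, self-contained replacement. Parts (i), (ii) and (iv) are the natural arguments: $\inv(R)$ is an elementary abelian $2$-group inside the unit group, so its order is $2^t$; in an odd local ring, $(x-1)(x+1)=0$ together with $2\notin M$ forces $\inv(R)=\{1,-1\}$, after which the components are the cosets of $\langle 1\rangle$ and each induces exactly the cycle $C_{\chr(R)}$ (your observation that the induced subgraph on a coset acquires no extra edges, because $\inv(R)\subseteq\langle 1\rangle$, is the right thing to check); and the tensor decomposition in (iv) rests precisely on the fact that $0$ is never an involution, so componentwise adjacency already entails distinctness in every coordinate. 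The only place a referee would ask for one more line is in (iii): the existence of an additive surjection $R_1/M_1\to\mathbb{F}_2$ sending $\overline{1}\mapsto 1$ needs a word of justification --- extend $\{\overline{1}\}$ to an $\mathbb{F}_2$-basis of the residue field and take the corresponding coordinate functional; the tempting canonical choice, the trace map, does not work in general, since $\mathrm{Tr}(1)=0$ whenever $[R_1/M_1:\mathbb{F}_2]$ is even. With that detail supplied, your bipartition argument (every involution has $\phi$-value $1$, so every edge crosses the two fibres of $\phi$) is complete, and your odd-cycle argument for the converse direction is likewise correct since $\chr(R)$ divides $|R|$.
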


Using the above theorem, we present a characterization of connected involutory Cayley graphs.

\begin{proposition}\label{conn}
Let $R=R_1\times \cdots \times R_t$, where each $R_i$ is a local ring. Then, the graph $\G(R)$ is connected if and only if at most one of the $\G(R_i)$'s is a connected graph of even size and each remaining $R_i$ is isomorphic to $\mathbb{Z}_{p_i^{n_i}}$ for some odd prime number $p_i$ and positive integer $n_i$.
\end{proposition}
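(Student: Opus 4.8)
The plan is to translate the problem into the connectivity theory of direct products of graphs. By Theorem~\ref{imp}\pref{imp 4} we have the isomorphism $\G(R)\simeq \G(R_1)\otimes\cdots\otimes\G(R_t)$, so the question becomes: when is a direct product of the graphs $\G(R_i)$ connected? The governing fact (Weichsel's theorem) is that a direct product of graphs is connected if and only if every factor is connected and at most one factor is bipartite. I would either cite this classical result or supply the short argument I need: a walk in $\bigotimes_i \G(R_i)$ is exactly a tuple of walks of one common length $\ell$ in the factors; in a connected non-bipartite graph, walks of every sufficiently large length join any prescribed ordered pair of vertices, whereas in a connected bipartite graph only lengths of a single fixed parity occur. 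Hence, with at most one bipartite factor, a common admissible length can always be chosen and the product is connected, while two bipartite factors force a parity clash for a suitable choice of target vertices and destroy connectivity.

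With this tool in hand, I would first argue necessity. Since each coordinate projection $\pi_i$ sends adjacent vertices of the product to adjacent vertices of $\G(R_i)$, it is a surjective graph homomorphism; as the homomorphic image of a connected graph is connected, connectivity of $\G(R)$ forces every $\G(R_i)$ to be connected. By Theorem~\ref{imp}\pref{imp 3}, $\G(R_i)$ is bipartite precisely when $|R_i|$ is even, so if two of the factors had even size the product would contain at least two bipartite factors and be disconnected; therefore at most one $R_i$ has even size. It remains to identify the odd-size factors: for a local ring $R_i$ of odd size, Theorem~\ref{imp}\pref{imp 2} writes $\G(R_i)$ as a disjoint union of $|R_i|/\chr(R_i)$ cycles, so connectivity of $\G(R_i)$ is equivalent to $|R_i|=\chr(R_i)$. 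Because $\chr(R_i)$ is the additive order of $1$, this equality says that $1$ generates $(R_i,+)$, which forces $R_i\simeq \mathbb{Z}_{p_i^{n_i}}$ for an odd prime $p_i$; these are exactly the ``remaining'' factors in the statement.

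For sufficiency I would reverse the reasoning. If at most one factor has even size with $\G(R_i)$ connected, and every other factor is $\mathbb{Z}_{p_i^{n_i}}$ for an odd prime $p_i$, then by Theorem~\ref{imp}\pref{imp 2} each such $\mathbb{Z}_{p_i^{n_i}}$ contributes a single odd cycle, hence a connected non-bipartite factor; thus all factors are connected and at most one is bipartite, and Weichsel's criterion yields that $\G(R)$ is connected. The main obstacle I anticipate is not any isolated computation but making the multi-factor connectivity criterion fully rigorous — in particular the parity bookkeeping that shows precisely the even-size (bipartite) factors obstruct connectivity and that a single such factor causes no obstruction. Everything else reduces to the structural facts already recorded in Theorem~\ref{imp}.
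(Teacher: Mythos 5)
Your proposal is correct and takes essentially the same route as the paper: both reduce the problem via the decomposition $\G(R)\simeq \G(R_1)\otimes\cdots\otimes\G(R_t)$ from Theorem~\ref{imp}~\pref{imp 4}, invoke Weichsel's criterion that a direct product is connected if and only if all factors are connected and at most one is bipartite, and then translate back using Theorem~\ref{imp}~\pref{imp 3} (bipartite $\Leftrightarrow$ even size) and Theorem~\ref{imp}~\pref{imp 2} (odd-size connected factors force $|R_i|=\chr(R_i)$, hence $R_i\simeq \mathbb{Z}_{p_i^{n_i}}$). The only difference is that you sketch a proof of Weichsel's theorem, whereas the paper simply cites it.
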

\begin{proof}
Since $R=R_1\times \cdots \times R_t$, by Theorem \ref{imp} \pref{imp 4}, we infer that $\G(R)\simeq\G(R_1)\otimes \cdots \otimes \G(R_t)$. According to \cite[Theorem 1]{connec}, we conclude that $\G(R)$ is connected if and only if $\G(R_i)$ are connected and at most one of them is bipartite. Consequently, by Theorem \ref{imp} \pref{imp 3}, at most one of the $R_i$'s (so $\G(R_i)$'s) is of even size and the others are of odd size. In addition, if $\G(R_i)$ is a connected graph of odd size, by Theorem \ref{imp} \pref{imp 2}, we infer that $|R_i|=\chr(R_i)$, implying $R_i\simeq \mathbb{Z}_{p_i^{n_i}}$ for some odd prime number $p_i$ and positive integer $n_i$. Hence, the result follows.
\end{proof}

In the following lemma, we present some necessary conditions for a ring $R$ to satisfy when $\G(R)$ is of genus $1$.

\begin{lemma}\label{nec}
If $R$ is a ring for which $\G(R)$ is of genus $1$, then  $\G(R)$ is connected and $4$-regular.
\end{lemma}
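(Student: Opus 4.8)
The plan is to prove the two assertions—connectedness and $4$-regularity—by combining the regularity structure from Theorem~\ref{imp}~\pref{imp 1} with known lower bounds on the genus forced by disconnectedness or by high regularity.

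First I would establish $4$-regularity. By Theorem~\ref{imp}~\pref{imp 1}, $\G(R)$ is $2^t$-regular for some $t\ge 0$. The planar classification (Theorem~\ref{planar}) together with the remark that $\G(R)$ is planar precisely when it is $1$- or $2$-regular handles the cases $t=0$ and $t=1$: these give genus $0$, not genus $1$. Hence it suffices to rule out $t\ge 3$, i.e. to show that a $2^t$-regular graph with $t\ge 3$ (so regularity at least $8$) cannot have genus $1$. For this I would invoke the standard Euler-formula bound for a simple graph embedded on the torus: from $V-E+F=0$ and $2E\ge 3F$ one gets $E\le 3V$, so the average degree is at most $6$. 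An $r$-regular graph with $r\ge 7$ therefore cannot embed on the torus, which eliminates every $2^t$ with $t\ge 3$. Thus the only surviving possibility for a genus-$1$ graph is $t=2$, giving $4$-regularity.

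Next I would establish connectedness. The key point is that the genus is additive over connected components: $g(G)=\sum_i g(G_i)$ where the $G_i$ are the components (this is the standard Battle--Harary--Kodama--Youngs additivity theorem, which I would cite). Since $\G(R)$ has no isolated vertices (every vertex $x$ is adjacent to $x+1$, as $1\in\inv(R)$), each component has at least one edge. If $\G(R)$ were disconnected with genus $1$, additivity would force exactly one component to have genus $1$ and every other component to be planar; moreover, since $\G(R)$ is vertex-transitive (being a Cayley graph of the group $(R,+)$), all its components are isomorphic. But then either all components are planar, giving total genus $0$, or all are nonplanar, giving total genus at least $2$—and with two or more isomorphic components neither outcome is genus $1$. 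Hence $\G(R)$ must be connected.

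I expect the $4$-regularity half to be the more delicate step to phrase cleanly, since it rests on the Euler-characteristic inequality for torus embeddings and on correctly dispatching the small cases $t=0,1$ via the planar classification. The connectedness argument is conceptually short once one notes vertex-transitivity forces isomorphic components, but one must be careful to justify that a genus-$1$ vertex-transitive graph cannot split into two or more isomorphic pieces; the cleanest route is precisely the additivity of genus combined with the fact that no component is edgeless. An alternative, and perhaps more self-contained, route to connectedness would bypass additivity entirely by appealing to Proposition~\ref{conn}: one could argue that if $\G(R)$ is disconnected, its structure (a disjoint union of isomorphic vertex-transitive pieces) is incompatible with genus exactly $1$, but the additivity theorem makes this rigorous with the least effort.
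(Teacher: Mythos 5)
Your proof is correct and follows essentially the same route as the paper: the paper likewise combines the $2^t$-regularity of Theorem~\ref{imp}~\pref{imp 1} with a degree bound for genus-one graphs (citing Wickham's result that the minimum degree is at most $6$, which is exactly the Euler-formula bound you derive by hand) to get $4$-regularity, and then uses additivity of genus over components together with the fact that all components of a Cayley graph are isomorphic to get connectedness. The only differences are cosmetic---you derive the degree bound rather than cite it, and you spell out the component-counting argument the paper leaves implicit.
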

\begin{proof}
Since $\G(R)$ is of genus $1$, by \cite[Proposition 2.1]{wick}, its minimum degree is at most $6$. Taking Theorem \ref{imp} \pref{imp 1} into account, we infer that $\G(R)$ is $1$, $2$, or $4$-regular. As $1$-regular and $2$-regular graphs are clearly planar (that is, of genus $0$), we conclude that $\G(R)$ is a $4$-regular graph.

Moreover, it is known that the genus of a graph is the sum of the genera of its connected components (see \cite[p. 55]{white}). Furthermore, it is easily seen that in a Cayley graph, all connected components are isomorphic. Consequently, a Cayley graph of genus $1$ is connected, and hence so is $\G(R)$, as desired.
\end{proof}

In Theorem \ref{main}, we shall see that the converse of the above lemma is also true. Therefore, in the following lemmas, we aim to classify all rings $R$ whose involutory Cayley graphs are connected and $4$-regular. To this end, we begin with the nonlocal rings.

\begin{lemma}\label{nonl}
Let $R$ be a nonlocal ring. The graph $\G(R)$ is connected and $4$-regular if and only if $R$ is isomorphic to 
$$\mathbb{Z}_{p^n}\times \mathbb{Z}_{q^m} , \quad \mathbb{Z}_{p^n}\times \mathbb{Z}_{q^m}\times \mathbb{Z}_2 , \quad\mathbb{Z}_{p^n}\times \mathbb{Z}_4 \quad \text{or}\quad \mathbb{Z}_{p^n}\times \mathbb{Z}_2[x]/\langle x^2 \rangle $$
for some odd prime numbers $p,q$ and positive integers $n,m$.
\end{lemma}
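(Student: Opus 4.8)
The plan is to exploit the decomposition $R = R_1 \times \cdots \times R_t$ into local rings, where $t \geq 2$ since $R$ is nonlocal. By Theorem \ref{imp} \pref{imp 4} we have $\G(R) \simeq \G(R_1) \otimes \cdots \otimes \G(R_t)$, in which each $\G(R_i)$ is $k_i$-regular with $k_i = |\inv(R_i)|$, so $\G(R)$ is $k$-regular with $k = k_1 \cdots k_t$. The reverse implication is a direct verification: for each ring in the list one computes the factors $k_i$, confirms $k_1 \cdots k_t = 4$, and invokes Proposition \ref{conn} to check connectedness. The content lies in the forward implication, which I would drive by combining two constraints. The \emph{regularity} constraint is $k_1 \cdots k_t = 4 = 2^2$, where each $k_i$ is a power of $2$ by Theorem \ref{imp} \pref{imp 1}. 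The \emph{connectedness} constraint, supplied by Proposition \ref{conn}, forces at most one $R_i$ to have even size (with $\G(R_i)$ connected) while every remaining factor is $\mathbb{Z}_{p_i^{n_i}}$ for an odd prime $p_i$.

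The heart of the argument is to pin down the admissible even-size local factor. Each odd factor $\mathbb{Z}_{p_i^{n_i}}$ satisfies $\inv(\mathbb{Z}_{p_i^{n_i}}) = \{1,-1\}$, hence contributes $k_i = 2$; so if $r$ is the number of odd factors and the (at most one) even factor $R_j$ has $k_j = 2^c$, the regularity equation reduces to $2^{r+c} = 2^2$, that is $r + c = 2$. It remains to identify the connected even local rings of small regularity. In the $1$-regular case, $\inv(R_j) = \{1\}$ forces $\chr(R_j) = 2$ and $M_j = 0$ (otherwise a nonzero element in the last nonzero power of $M_j$ squares to $0$, producing a second involution), so $R_j$ is a field of characteristic $2$; since a connected $1$-regular graph is a single edge, $|R_j| = 2$ and $R_j \simeq \mathbb{Z}_2$. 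In the $2$-regular case I would use that $\G(R_j)$ is connected exactly when $\inv(R_j)$ generates $(R_j,+)$: if $\chr(R_j) = 2$ then $\inv(R_j) = \{1, 1+m\}$ with $0 \neq m \in M_j$ and $m^2 = 0$, and this pair generates only $\{0, 1, m, 1+m\}$, forcing $|R_j| = 4$ and hence $R_j \simeq \mathbb{Z}_2[x]/\langle x^2 \rangle$ (the other local ring of order $4$ and characteristic $2$, namely $\mathbb{F}_4$, is $1$-regular); while if $\chr(R_j) \neq 2$ then $\inv(R_j) = \{1, -1\}$ generates the cyclic group $\langle 1 \rangle$, forcing $|R_j| = \chr(R_j)$ and thus $R_j \simeq \mathbb{Z}_{2^b}$, where $|\inv(R_j)| = 2$ isolates $b = 2$, i.e. $R_j \simeq \mathbb{Z}_4$.

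With this classification, the cases assemble quickly from $r + c = 2$ and $t \geq 2$. If there is no even factor, then $c = 0$, $r = 2$, giving $R \simeq \mathbb{Z}_{p^n} \times \mathbb{Z}_{q^m}$. If there is a $1$-regular even factor $\mathbb{Z}_2$, then $c = 0$, $r = 2$, giving $R \simeq \mathbb{Z}_{p^n} \times \mathbb{Z}_{q^m} \times \mathbb{Z}_2$. If there is a $2$-regular even factor, then $c = 1$, $r = 1$, giving $R \simeq \mathbb{Z}_{p^n} \times \mathbb{Z}_4$ or $R \simeq \mathbb{Z}_{p^n} \times \mathbb{Z}_2[x]/\langle x^2 \rangle$; the residual solution $r = 0$, $c = 2$ is discarded as it leaves a single local factor. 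I expect the $2$-regular even case to be the main obstacle: one must argue through the generated subgroup that connectedness rules out the remaining $2$-regular local rings from Theorem \ref{planar}, namely $\mathbb{Z}_2[x]/\langle x^3 \rangle$ and $\mathbb{Z}_4[x]/\langle 2x, x^2 - 2 \rangle$, whose involution sets $\{1, 1+x^2\}$ and $\{1, 3\}$ generate proper subgroups of order $4$ inside additive groups of order $8$, so that their involutory Cayley graphs are disconnected.
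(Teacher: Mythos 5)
Your proof is correct, and its skeleton --- decomposing $R = R_1 \times \cdots \times R_t$ into local factors and combining the regularity count from Theorem \ref{imp} with the connectivity constraint of Proposition \ref{conn} --- is the same as the paper's. The genuine difference lies in how the admissible even local factor is identified. The paper organizes the argument by casing on the number of factors ($t=2$, $t=3$, $t\geq 4$) and, at the crucial step, simply cites \cite[Corollary 2.7]{inv} to conclude that a local ring $S$ whose graph $\G(S)$ is a connected $2$-regular graph of even size must be $\mathbb{Z}_4$ or $\mathbb{Z}_2[x]/\langle x^2\rangle$. You instead run everything through the single bookkeeping equation $r+c=2$ and classify the even factor from scratch, using the criterion that a Cayley graph is connected exactly when its (symmetric) connection set generates the underlying additive group --- a tool the paper never invokes in this proof. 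Your subgroup-generation computations check out: in characteristic $2$ the set $\{1,1+m\}$ with $m^2=0$ generates only $\{0,1,m,1+m\}$, forcing $S\simeq \mathbb{Z}_2[x]/\langle x^2\rangle$; in characteristic $2^b$, $b\geq 2$, the set $\{1,-1\}$ generates $\langle 1\rangle$, forcing $S\simeq\mathbb{Z}_{2^b}$ with $b=2$; and the $1$-regular case correctly collapses to $\mathbb{Z}_2$ via the nilpotent-element argument. What your route buys is self-containedness and transparency: it makes visible why exactly $\mathbb{Z}_2$, $\mathbb{Z}_4$ and $\mathbb{Z}_2[x]/\langle x^2\rangle$ appear, and why the remaining $2$-regular local rings from Theorem \ref{planar}, namely $\mathbb{Z}_2[x]/\langle x^3\rangle$ and $\mathbb{Z}_4[x]/\langle 2x, x^2-2\rangle$, are excluded (their involution sets generate proper subgroups of index $2$, so their graphs are disconnected) --- facts the paper leaves buried in the earlier reference. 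What the paper's route buys is brevity, since the prior corollary absorbs that work. Both proofs are complete.
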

\begin{proof}
($\Leftarrow$) It follows from Theorem \ref{imp} and Proposition \ref{conn}.

($\Rightarrow$) First, assume that $R$ is a product of two local rings. 
 If $|R|$ is odd,  Proposition \ref{conn} implies that $R=\mathbb{Z}_{p^n}\times \mathbb{Z}_{q^m}$ for some odd prime numbers $p,q$ and positive integers $m,n$. Otherwise, $|R|$ is even, and in this case, by Proposition \ref{conn}, we conclude that $R=\mathbb{Z}_{p^n}\times S$, where $p$ is an odd prime, $n$ a positive integer and $\G(S)$ a $2$-regular connected graph of even size (Note that $\G(\mathbb{Z}_{p^n})$ is $2$-regular by Theorem \ref{imp} \pref{imp 2}). Thus, $\G(S)$ is a cycle graph and by \cite[Corollary 2.7]{inv}, as $S$ is local, one infers that $S$ is isomorphic to either $\mathbb{Z}_4$ or $\mathbb{Z}_2[x]/\langle x^2 \rangle$, from which the desired result follows.
 
 Now, we assume that $R$ is a product of three local rings. If $|R|$ is odd, utilizing Proposition \ref{conn}, we infer that $R=\mathbb{Z}_{p_1^{n_1}}\times \mathbb{Z}_{p_2^{n_2}} \times \mathbb{Z}_{p_3^{n_3}}$ for some odd prime numbers $p_i$ and positive integers $n_i$. However,  Theorem \ref{imp} shows that $\G(R)$ would then be $8$-regular, a contradiction. Hence, $|R|$ must be even. Again, applying Proposition \ref{conn}, we deduce  that $R=\mathbb{Z}_{p^n}\times \mathbb{Z}_{q^m}\times S$, where $p,q$ are odd prime numbers, $n,m$ are positive integers and $S$ is a ring of even size such that $\G(S)$ is connected. Since $\G(\mathbb{Z}_{p^n})$ and $\G(\mathbb{Z}_{q^m})$ are $2$-regular by Theorem \ref{imp} \pref{imp 2}, then $\G(S)$ must be  $1$-regular. Connectivity forces $\G(S)\simeq K_2$, so $S\simeq \mathbb{Z}_2$, which gives the desired result
 
 Furthermore, if $R=R_1\times \cdots \times R_t$ is the decomposition of $R$ into local rings and $t\geq 4$, by Proposition \ref{conn}, at least three of $R_i$'s, say $R_1$, $R_2$ and $R_3$ are of odd size. Since $\G(R_1)$, $\G(R_2)$ and $\G(R_3)$ are $2$-regular graphs (by Theorem \ref{imp} \pref{imp 2}), we conclude that $\G(R)$ is not $4$-regular, which is a contradiction. Hence, the result follows.
\end{proof}

Now, we deal with local rings in three lemmas. We begin with the case where the underlying ring is of  characteristic $2$.

\begin{lemma}\label{ch2}
For a local ring $R$ of characteristic $2$, the graph $\G(R)$ is connected and $4$-regular if and only if $R$ is isomorphic to $\mathbb{Z}_2[x,y]/\langle x^2,xy,y^2 \rangle$.
\end{lemma}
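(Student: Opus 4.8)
The plan is to convert the two graph conditions into ring-theoretic ones using the structure of $\inv(R)$. Since $\chr(R)=2$, for $u\in R$ we have $u^2=1$ if and only if $(u-1)^2=u^2+1=0$ (because $2u=0$), so $\inv(R)=1+A$, where $A:=\{n\in R: n^2=0\}$. The first observation is that $A$ is an additive subgroup of $(R,+)$: if $a^2=b^2=0$ then $(a+b)^2=a^2+2ab+b^2=a^2+b^2=0$. Moreover every square-zero element is nilpotent, hence a non-unit, so $A\subseteq M$, where $M$ is the maximal ideal. Since $\G(R)$ is $|\inv(R)|$-regular, $4$-regularity is equivalent to $|A|=4$.

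Next I would treat connectivity via the standard fact that the Cayley graph $\mathrm{Cay}((R,+),\inv(R))$ is connected exactly when $\inv(R)$ generates $(R,+)$. As $\inv(R)=1+A$, one checks that the generated subgroup is precisely $A+\mathbb{Z}_2\cdot 1$: sums of two elements of $1+A$ recover $A$ (using $2=0$), and $1\in 1+A$. Because $1\notin A$, this subgroup has order $2|A|$, so connectivity together with $|A|=4$ forces $|R|=8$. Then $A\subseteq M$ and $1+A\subseteq \U(R)$ (since $1+M\subseteq \U(R)$), with $|A|=|1+A|=4$; as $R=M\sqcup \U(R)$ has order $8$, both $M$ and $\U(R)$ must have order $4$, which forces $M=A$ and residue field $R/M\simeq\mathbb{Z}_2$.

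It remains to identify the local ring $R$ with $\chr(R)=2$, $|M|=4$, and $M=A$, that is, $m^2=0$ for every $m\in M$. I would first argue $M^2=0$. If not, then by Nakayama's lemma $M$ would be principal, say $M=\langle m_0\rangle$ with $|M/M^2|=2$, giving $M=\{0,m_0,m_0^2,m_0+m_0^2\}$; but $m_0\in M=A$ yields $m_0^2=0$, hence $M^2=0$, a contradiction. With $M^2=0$, the ring is the square-zero extension $R=\mathbb{Z}_2\oplus M$ of $\mathbb{Z}_2$ by the $2$-dimensional $\mathbb{Z}_2$-vector space $M$; choosing a basis $\{x,y\}$ of $M$ and imposing $x^2=xy=y^2=0$ gives $R\simeq \mathbb{Z}_2[x,y]/\langle x^2,xy,y^2\rangle$. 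The converse direction is immediate: this ring has $M^2=0$ with $|M|=4$, so $A=M$ has order $4$ and $R=A+\mathbb{Z}_2\cdot 1$, yielding a connected $4$-regular graph.

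The subgroup and coset counts are routine; the main obstacle is the final identification step, where one must rule out the principal-ideal case $\mathbb{Z}_2[x]/\langle x^3\rangle$ (there $A=\{0,x^2\}$ has order $2$, so it is only $2$-regular) and confirm that the condition $M^2=0$ pins down $R$ uniquely up to isomorphism.
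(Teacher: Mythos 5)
Your proposal is correct; every step checks out, including the two points where care is needed (the equivalence of connectivity with $\inv(R)=1+A$ generating $(R,+)$, and the Nakayama argument forcing $M^2=0$). The overall skeleton matches the paper's: both proofs reduce the graph conditions to the ring conditions $|R|=8$, $M=A$, $M^2=0$, and both start from the observation that in characteristic $2$ an involution is $1$ plus a square-zero element. The execution differs at each stage, though. Where you treat $A$ abstractly as an additive subgroup and invoke the generation criterion for Cayley-graph connectivity plus a coset count to get $|R|=8$ and $M=A$, the paper names the three nonzero square-zero elements $x,y,z$, observes that $A\cup(1+A)$ induces a $4$-regular subgraph (a copy of $K_{4,4}$) inside the connected $4$-regular graph $\G(R)$, and concludes $R=A\cup(1+A)$ that way. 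Where you obtain $M^2=0$ structurally (if $M^2\neq 0$ then $M$ is principal by Nakayama, but its generator is square-zero, contradiction) and then classify $R$ as the square-zero extension $\mathbb{Z}_2\oplus M$, the paper derives the multiplication table by hand: $(x+y)^2=0$ forces $z=x+y$, and a short case analysis kills $xy$, $xz$, $yz$, after which $R=\{a+bx+cy\}$ is read off directly. Your route is more conceptual and avoids the case analysis; in particular the Nakayama step would adapt to larger residue fields or larger $|M|$, so it is better suited to generalization. The paper's route is more elementary and self-contained, and it produces the explicit $K_{4,4}$ identification as a byproduct, which the paper reuses in the genus computation of Theorem \ref{main}; with your approach that identification still falls out ($R=A\cup(1+A)$ with complete bipartite adjacency between the two cosets), but it is worth stating explicitly since the main theorem depends on it.
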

\begin{proof}
($\Rightarrow$)
Let $M$ be the maximal ideal of $R$. Assuming $u\in \inv(R)$, we obtain $(u+M)^2=1+M$ in the field $R/M$ of characteristic $2$, implying that $u\in u+M = 1+M$. Since  $\G(R)$ is a $4$-regular graph, we can assume that 
$$\inv(R)=\{1,1+x,1+y,1+z\},$$
where $x,y,z\in M\setminus \{0\}$. Since $1=(1+x)^2=1+x^2$, we infer that $x^2=0$ and similarly, $y^2=z^2=0$. So, if $m\in M$ and $m^2=0$, then we can conclude that $m\in \{0,x,y,z\}$.

Now, note that $(x+y)^2=0$, so $x+y \in \{0,x,y,z\}$, from which we infer that $z=x+y$. We also have $(xy)^2=0$, which implies that $xy=0$, since if $xy=z$, then $z=xy=x(x+z)=x(x+xy)=x^2(1+y)=0$, which is impossible and the two other cases also lead to a contradiction.
By a similar argument, we obtain that $xz=yz=0$. It is evident that each member of $A=\{0,x,y,z\}$ is adjacent to every member of $1+A=\{1,1+x,1+y,1+z\}$.  Since $\G(R)$ is a connected $4$-regular graph, we infer that $R=A\cup (1+A)$. Consequently, $|R|=8$ and $M=A$. Clearly, $M^2=0$, $\G(R)\simeq K_{4,4}$ and $$R=\{a+bx+cy\mid a,b,c = 0,1\}.$$
Therefore, we deduce that $R$ is isomorphic to $\mathbb{Z}_2[x,y]/ \langle x^2,xy,y^2\rangle$.

($\Leftarrow$) Note that for $R=\mathbb{Z}_2[x,y]/ \langle x^2,xy,y^2\rangle $, we have
$\G(R)\simeq K_{4,4}$, which is a connected $4$-regular graph.
\end{proof}

The next lemma deals with local rings of characterstic $4$.

\begin{lemma}\label{ch4}
Let $R$ be a local ring of characteristic $4$. The graph $\G(R)$ is connected and $4$-regular if and only if $R$ is isomorphic to $\mathbb{Z}_4[x]/\langle x^2,2x \rangle $.
\end{lemma}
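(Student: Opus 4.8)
The plan is to prove both implications, with the forward direction carrying essentially all of the work. For the easy direction I would take $R=\mathbb{Z}_4[x]/\langle x^2,2x\rangle$, note that $|R|=8$ with maximal ideal $M=\{0,2,x,2+x\}$, check directly that every element of $1+M$ is an involution (so $\G(R)$ is $4$-regular), and verify connectivity by observing that $\inv(R)=\{1,3,1+x,3+x\}$ generates $(R,+)$.

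For the forward direction, first I would reduce to a question about the maximal ideal $M$. Since $\chr(R)=4=2^2$, the residue field $R/M$ has characteristic $2$, so the Frobenius map is injective and $\bar u^2=\bar 1$ forces $\bar u=\bar 1$; hence every involution lies in $1+M$. Writing $u=1+m$ with $m\in M$, the condition $u^2=1$ becomes $m^2+2m=0$, i.e.\ $m^2=2m$ (using $-2=2$). Setting $I=\{m\in M:m^2=2m\}$, one has $\inv(R)=1+I$, so $4$-regularity means $|I|=4$. Two members are immediate, $0\in I$ and $2\in I$ (as $2\in M$ and $4=0$), and $I$ is closed under negation since $(-m)^2=m^2=2m=-2m=2(-m)$. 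Thus $I=\{0,2,a,b\}$ for distinct $a,b\in M\sm\{0,2\}$.

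Next I would bring in connectivity, which is the decisive hypothesis. Because $\G(R)$ is a connected Cayley graph, $\inv(R)$ generates $(R,+)$; as $\inv(R)=\{1,3,1+a,1+b\}$, this gives $R=\langle 1,a,b\rangle$ as an additive group. (It is worth stressing that connectivity cannot be dropped: $\mathbb{Z}_4[x]/\langle x^2\rangle$ is also $4$-regular but disconnected, so a bare involution count does not suffice.) I would then split according to the additive order of $a$, which, since $I$ is closed under negation and $a\neq 0,2$, is governed by whether $-a=a$ or $-a=b$. If some element of $I$ has order $4$, say $a$, then $2a\neq 0$ and $b=-a=3a$; here I would produce the extra relation by checking that $2a\in I$ (indeed $(2a)^2=4a^2=0=2(2a)$), which forces $2a\in\{2,a,3a\}$ and hence $2a=2$; multiplying $2a=2$ by $a$ and using $a^2=2a=2$ yields $0=2$, a contradiction. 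In the remaining case $2a=2b=0$, so $a^2=b^2=0$; then $2ab=(2a)b=0$, which makes $(a+b)^2=2ab=0=2(a+b)$, so $a+b\in I$, and since $a+b\notin\{0,a,b\}$ I conclude $a+b=2$, i.e.\ $b=2+a$. Consequently $R=\langle 1,a\rangle=\mathbb{Z}_4\cdot 1+\mathbb{Z}_2\cdot a$ has order $8$, and with the relations $a^2=0$, $2a=0$ the natural surjection $\mathbb{Z}_4[x]/\langle x^2,2x\rangle\to R$ is an isomorphism.

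The main obstacle I anticipate is the order-$4$ subcase: one must produce the right auxiliary element of $I$ to force a contradiction, and the correct choice is $2a$, whose membership in $I$ pins down $2a=2$ and collapses the characteristic. Everything else is bookkeeping, but the use of connectivity to eliminate the $4$-regular-but-disconnected ring $\mathbb{Z}_4[x]/\langle x^2\rangle$ is the conceptual heart of the argument.
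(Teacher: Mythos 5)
Your proof is correct, and while it shares the computational core with the paper's argument, it is organized along a genuinely different line, chiefly in how and when connectivity enters. Both proofs begin identically: involutions lie in $1+M$ because the residue field has characteristic $2$, and the involution condition reads $m^2=2m$. From there the paper immediately writes $\inv(R)=\{1,-1,1+x,-1+x\}$ (an assertion that implicitly rests on the characteristic-$4$ identity $(x+1)^2=(x-1)^2$, which it does not spell out) and then uses connectivity \emph{geometrically} and \emph{early}: the sets $A=\{0,x,2,x+2\}$ and $1+A$ induce a $K_{4,4}$ in which every vertex already has degree $4$, so connectedness forces $R=A\cup(1+A)$, hence $|R|=8$ and $M=A$; only then does the constraint $x^2=2x\in M$ yield the case analysis ending in $2x=0$. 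You instead work purely with $I=\{m\in M: m^2=2m\}$ and show, using $4$-regularity alone, that $I=\{0,2,a,2+a\}$ with $a^2=2a=0$ — your order-$4$ case is eliminated by exactly the same $0=2$ contradiction the paper uses for its $2x=2$ subcase, but you obtain the needed constraint from $2a\in I$ rather than from $2x\in M$, so no size bound on $R$ is required. Connectivity appears only at the end, in its algebraic guise (the connection set of a connected Cayley graph generates the group), to force $|R|=|\langle 1,a\rangle|=8$. What each approach buys: the paper's saturation argument is shorter once the form of $\inv(R)$ is granted; yours cleanly separates the consequences of $4$-regularity from those of connectivity, rigorously derives (rather than assumes) the ``differ by $2$'' structure of the involution set, and your explicit counterexample $\mathbb{Z}_4[x]/\langle x^2\rangle$ — which is $4$-regular but disconnected — is a worthwhile sanity check that the paper does not record.
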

\begin{proof}
($\Rightarrow$) Let $M$ be the maximal ideal of $R$. Similar to the proof of Lemma \ref{ch2}, we see that $\inv(R)\subseteq 1+M$. Moreover, as $\G(R)$ is $4$-regular and $-1\in \inv(R)$, we may assume that
$$\inv(R)=\{1,-1,1+x,-1+x\},$$
where $x\in M\setminus \{0,2\}$. Since $1=(1+x)^2=1+2x+x^2$, we infer that $x^2=2x$.

Note that each member of $A=\{0,x,2,x+2\}$ is adjacent to every member of $1+A=\{1,1+x,-1,-1+x\}$. Since  $\G(R)$ is connected and $4$-regular, it can be concluded that $R=A\cup (1+A)$ and $|R|=8$. It is clear that $M=A$ and $\G(R)\simeq K_{4,4}$. 

On the other hand, we have $2x=x^2\in M=\{0,2,x,2+x\}$. It is not hard to see that if $2x$ is either $2$, $x$ or $2+x$, we get a contradiction. Hence $x^2=2x=0$. Consequently, 
$$R=\{a+bx \mid a=0,1,2,3, \,b=0,1\},$$
 from which we infer that $R$ is isomorphic to 
 $\mathbb{Z}_4[x]/\langle x^2,2x \rangle$.
 
 ($\Leftarrow$)
 Since $\G(R)\simeq K_{4,4}$ for $R=\mathbb{Z}_4[x]/\langle x^2,2x \rangle $, the result follows.
\end{proof}

For a better illustration of the proof of the next lemma, we present the following example.

\begin{example}\label{ex-zn}
The involutory Cayley graphs of the rings $\mathbb{Z}_8$ and $\mathbb{Z}_{16}$ have genus $1$. To see this, note that they are not planar, as Theorem \ref{planar} indicates. Moreover, for a toroidal embedding, consider the following figures. For $\mathbb{Z}_8$, the vertices $0$ and $4$ of the inner rhombus can be connected to the outer rhombus by adding a handle. 

\begin{center}
	\begin{tikzpicture}
	\node (v3) at (-0.5,3.5) {5};
	\node (v1) at (-0.5,9.5) {1};
	\node (v4) at (2.5,6.5) {4};
	\node (v2) at (-3.5,6.5) {0};
	\draw  (v1) edge (v2);
	\draw  (v2) edge (v3);
	\draw  (v3) edge (v4);
	\draw  (v4) edge (v1);
	\node [draw,outer sep=0,inner sep=1,minimum size=10] (v6) at (-0.5,9.5) {};
	\node [draw,outer sep=0,inner sep=1,minimum size=10] at (-3.5,6.5) {};
	\node [draw,outer sep=0,inner sep=1,minimum size=10] (v7) at (-0.5,3.5) {};
	\node [draw,outer sep=0,inner sep=1,minimum size=10] at (2.5,6.5) {};
	\node [draw,outer sep=0,inner sep=1,minimum size=10] (v5) at (-2.5,6.5) {2};
	\node [draw,outer sep=0,inner sep=1,minimum size=10] (v8) at (1.5,6.5) {6};
	\draw  (v5) edge (v6);
	\draw  (v5) edge (v7);
	\draw  (v7) edge (v8);
	\draw  (v8) edge (v6);
	\node [draw,outer sep=0,inner sep=1,minimum size=10] (v9) at (-0.5,8.5) {3};
	\node [draw,outer sep=0,inner sep=1,minimum size=10] (v10) at (-0.5,4.5) {7};
	\draw  (v9) edge (v5);
	\draw  (v5) edge (v10);
	\draw  (v10) edge (v8);
	\draw  (v8) edge (v9);
	\node [draw,outer sep=0,inner sep=1,minimum size=10] (v11) at (-1.5,6.5) {4};
	\node [draw,outer sep=0,inner sep=1,minimum size=10] (v12) at (0.5,6.5) {0};
	\draw  (v9) edge (v11);
	\draw  (v11) edge (v10);
	\draw  (v10) edge (v12);
	\draw  (v12) edge (v9);
\end{tikzpicture}
\end{center}

\noindent
Likewise, for $\mathbb{Z}_{16}$, the vertices $0$ and $8$ can be connected to the outer rhombus via a handle, as in the case of $\mathbb{Z}_8$.

\begin{center}
	\begin{tikzpicture}
	
	\node (v3) at (-0.5,3.5) {};
	\node (v1) at (-0.5,9.5) {};
	\node (v4) at (2.5,6.5) {};
	\node (v2) at (-3.5,6.5) {};
	
	\node [draw,outer sep=0,inner sep=1,minimum size=10] (v6) at (-0.5,9.5) {5};
	\node [draw,outer sep=0,inner sep=1,minimum size=10] (v21) at (-3.5,6.5) {4};
	\node [draw,outer sep=0,inner sep=1,minimum size=10] (v7) at (-0.5,3.5) {13};
	\node [draw,outer sep=0,inner sep=1,minimum size=10] (v22) at (2.5,6.5) {12};
	\node [draw,outer sep=0,inner sep=1,minimum size=10] (v5) at (-2.5,6.5) {6};
	\node [draw,outer sep=0,inner sep=1,minimum size=10] (v8) at (1.5,6.5) {14};
	\node [draw,outer sep=0,inner sep=1,minimum size=10] (v9) at (-0.5,8.5) {7};
	\node [draw,outer sep=0,inner sep=1,minimum size=10] (v10) at (-0.5,4.5) {15};
	\node [draw,outer sep=0,inner sep=1,minimum size=10] (v11) at (-1.5,6.5) {8};
	\node [draw,outer sep=0,inner sep=1,minimum size=10] (v12) at (0.5,6.5) {0};
	\draw  (v9) edge (v11);
	\draw  (v11) edge (v10);
	\draw  (v10) edge (v12);
	\draw  (v12) edge (v9);
	\node [draw,outer sep=0,inner sep=1,minimum size=10] (v17) at (-4.5,6.5) {2};
	\node [draw,outer sep=0,inner sep=1,minimum size=10] (v18) at (3.5,6.5) {10};
	\node [draw,outer sep=0,inner sep=1,minimum size=10] (v20) at (-0.5,2.5) {11};
	\node [draw,outer sep=0,inner sep=1,minimum size=10] (v19) at (-0.5,10.5) {3};
	\node [draw,outer sep=0,inner sep=1,minimum size=10] (v14) at (-5.5,6.5) {0};
	\node [draw,outer sep=0,inner sep=1,minimum size=10] (v13) at (-0.5,11.5) {1};
	\node [draw,outer sep=0,inner sep=1,minimum size=10] (v16) at (4.5,6.5) {8};
	\node [draw,outer sep=0,inner sep=1,minimum size=10] (v15) at (-0.5,1.5) {9};
	\draw  (v13) edge (v14);
	\draw  (v14) edge (v15);
	\draw  (v15) edge (v16);
	\draw  (v16) edge (v13);
	\draw  (v13) edge (v17);
	\draw  (v17) edge (v15);
	\draw  (v15) edge (v18);
	\draw  (v18) edge (v13);
	\draw  (v19) edge (v17);
	\draw  (v17) edge (v20);
	\draw  (v20) edge (v18);
	\draw  (v18) edge (v19);
	\draw  (v19) edge (v21);
	\draw  (v21) edge (v20);
	\draw  (v20) edge (v22);
	\draw  (v22) edge (v19);
	\draw  (v6) edge (v21);
	\draw  (v21) edge (v7);
	\draw  (v7) edge (v22);
	\draw  (v22) edge (v6);
	\draw  (v6) edge (v5);
	\draw  (v5) edge (v7);
	\draw  (v7) edge (v8);
	\draw  (v8) edge (v6);
	\draw  (v5) edge (v9);
	\draw  (v5) edge (v10);
	\draw  (v10) edge (v8);
	\draw  (v8) edge (v9);
\end{tikzpicture}
\end{center}
\end{example}

\begin{lemma}\label{ch2n}
Let $R$ be a local ring with $\chr(R)=2^n$ for some $n\geq 3$. The following statements are equivalent.
\begin{enumerate}
\item\label{ch2n1} $\G(R)$ is of genus $1$;

\item\label{ch2n2} $\G(R)$ is connected and $4$-regular;

\item\label{ch2n3} $R$ is isomorphic to $\mathbb{Z}_{2^n}$.
\end{enumerate}
\end{lemma}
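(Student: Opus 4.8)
The plan is to prove the cycle of implications \pref{ch2n1} $\Rightarrow$ \pref{ch2n2} $\Rightarrow$ \pref{ch2n3} $\Rightarrow$ \pref{ch2n1}. The first implication requires nothing new: if $\G(R)$ has genus $1$, then Lemma \ref{nec} already guarantees that it is connected and $4$-regular. So the substance lies in the other two implications, a structural one and an embedding one.

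For \pref{ch2n2} $\Rightarrow$ \pref{ch2n3}, the idea is that connectivity and $4$-regularity together pin down $R$ via its involutions. Since $\chr(R)=2^n$, the prime subring of $R$ is isomorphic to $\mathbb{Z}_{2^n}$, and for $n\geq 3$ this subring already contains the four \emph{distinct} involutions $1$, $-1$, $2^{n-1}+1$, $2^{n-1}-1$; here is exactly where the hypothesis $n\geq 3$ enters, since for $n\le 2$ these collide. On the other hand, $4$-regularity of $\G(R)$ means $|\inv(R)|=4$, so these four elements must exhaust $\inv(R)$, and in particular $\inv(R)\se \mathbb{Z}_{2^n}$. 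Finally, connectivity of the Cayley graph $\G(R)=\mathrm{Cay}((R,+),\inv(R))$ is equivalent to $\inv(R)$ generating the additive group $(R,+)$. Since $1\in\inv(R)$ generates the prime subring $\mathbb{Z}_{2^n}$ while all of $\inv(R)$ lies inside it, the subgroup generated by $\inv(R)$ is precisely $\mathbb{Z}_{2^n}$; connectivity forces this to be all of $R$, whence $R\simeq\mathbb{Z}_{2^n}$.

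For \pref{ch2n3} $\Rightarrow$ \pref{ch2n1}, I would treat non-planarity and toroidality separately. The graph $\G(\mathbb{Z}_{2^n})$ is $4$-regular (it has the four involutions above), hence non-planar by \cite[Proposition 6.2]{inv}, so its genus is at least $1$. For the matching upper bound I would give an explicit torus embedding generalizing the pictures of Example \ref{ex-zn}. Concretely, $\G(\mathbb{Z}_{2^n})$ is the circulant graph on $\mathbb{Z}_{2^n}$ with connection set $\{\pm 1,\ \pm(2^{n-1}+1)\}$; imposing on every vertex the same rotation, namely the cyclic order $\big(+1,\ +(2^{n-1}+1),\ -1,\ -(2^{n-1}+1)\big)$ of its four incident edges, defines a combinatorial embedding. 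Tracing the facial walks shows that every face is a quadrilateral, so there are $F=2^n$ faces; together with $V=2^n$ and $E=2^{n+1}$, Euler's formula gives $V-E+F=0$, the Euler characteristic of the torus. Hence $\G(\mathbb{Z}_{2^n})$ embeds on the torus and, combined with non-planarity, has genus exactly $1$.

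The main obstacle is this last upper bound: producing a torus embedding uniformly in $n$, rather than picture-by-picture as in Example \ref{ex-zn}. The delicate point is verifying that the chosen rotation system produces only quadrilateral faces. This is where the balanced, vertex-transitive choice of rotation pays off: since the rotation is identical at every vertex and the translations $x\mapsto x+c$ are automorphisms of the circulant preserving it, every directed edge is a translate of one of the four edges leaving $0$, so it suffices to trace the four facial walks through $0$ and check that each closes up after four steps. Once all faces are confirmed to be $4$-gons, the face count $F=2^n$ and Euler's formula complete the argument, and the three statements are equivalent.
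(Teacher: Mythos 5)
Your proposal is correct, and it shares the paper's skeleton: the cycle \pref{ch2n1} $\Rightarrow$ \pref{ch2n2} $\Rightarrow$ \pref{ch2n3} $\Rightarrow$ \pref{ch2n1}, with Lemma \ref{nec} disposing of the first implication and the planarity classification supplying non-planarity. The differences lie in how the two substantive implications are executed. For \pref{ch2n2} $\Rightarrow$ \pref{ch2n3}, the paper notes that the prime subring $\mathbb{Z}_{2^n}\se R$ induces the subgraph $\G(\mathbb{Z}_{2^n})$ of $\G(R)$, which by \cite[Example 2.1]{inv} is already connected and $4$-regular; since $\G(R)$ is $4$-regular, vertices of this subgraph can have no neighbours outside it, and connectivity of $\G(R)$ forces $R=\mathbb{Z}_{2^n}$. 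Your version --- $|\inv(R)|=4$ by $4$-regularity, the four involutions $\pm 1,\ 2^{n-1}\pm 1$ of the prime subring (distinct exactly because $n\geq 3$) exhaust $\inv(R)$, and connectivity of a Cayley graph amounts to $\inv(R)$ generating $(R,+)$ --- is the same idea run through additive generation rather than subgraph saturation, and is a touch more self-contained since it does not lean on the cited example. The genuine divergence is in \pref{ch2n3} $\Rightarrow$ \pref{ch2n1}: the paper generalizes the pictures of Example \ref{ex-zn}, nesting the rhombi $i\sim i+1\sim 2^{n-1}+i\sim 2^{n-1}+i+1$ concentrically and adding a single handle to join the outermost and innermost rhombi (which share the vertices $0$ and $2^{n-1}$), whereas you build a combinatorial embedding from the uniform rotation $\big({+1},\,{+s},\,{-1},\,{-s}\big)$ with $s=2^{n-1}+1$, reduce the face check to the four facial walks through $0$ by vertex-transitivity, and apply Euler's formula $2^n-2^{n+1}+2^n=0$. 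The two embeddings are in fact the same --- your quadrilateral faces are precisely the paper's rhombi --- but your verification is uniform in $n$ and fully rigorous; the four walks do close up in four steps (for instance the walk starting with $0\to 1$ has displacement sequence $+1,\,-s,\,-1,\,+s$, summing to $0$), and connectedness of the graph guarantees the resulting orientable surface of Euler characteristic $0$ is indeed a torus. What the paper's route buys is brevity and a concrete picture; what yours buys is a proof that does not rest on a figure, at the cost of invoking the rotation-system (Edmonds--Heffter) machinery. Both, combined with non-planarity, give genus exactly $1$.
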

\begin{proof}
\pref{ch2n1} $\Rightarrow$ \pref{ch2n2} This follows from Lemma \ref{nec}.

\pref{ch2n2} $\Rightarrow$ \pref{ch2n3}
Since $\chr(R)=2^n$, the ring $R$ contains a copy of $\mathbb{Z}_{2^n}$. Therefore, $\G(\mathbb{Z}_{2^n})$ is a subgraph of $\G(R)$. Using \cite[Example 2.1]{inv}, we observe that  $\G(\mathbb{Z}_{2^n})$ is a connected  $4$-regular graph. Thus, we infer that $R$ is isomorphic to $\mathbb{Z}_{2^n}$, as desired.

\pref{ch2n3} $\Rightarrow$ \pref{ch2n1}
By Theorem \ref{planar}, we know that $\G(\mathbb{Z}_{2^n})$  is not planar.
Similar to Example \ref{ex-zn}, consider the following configuration for $\G(\mathbb{Z}_{2^{n}})$. For each $0\leq i \leq 2^{n-1} $, there exists a rhombus
$$ i \sim (i + 1) \sim (2^{n-1}+i) \sim (2^{n-1}+i+1) $$
and every element of $\mathbb{Z}_{2^n}$ appears in exactly two such rhombi. We begin with $i=0$ for the outer rhombus and increment  $i$ to move inward until reaching $i=2^{n-1}$. This configuration yields a planar embedding except for the two vertices $0$ and $2^{n-1}$, which lie on both the outermost and innermost rhombi.  By adding a handle (i.e., a toroidal bridge), we can connect these two rhombi, resulting in an embedding on a torus. This completes the proof.
\end{proof}

We are now in a position to state the main theorem, which classifies all rings $R$ such that $\G(R)$ has genus $1$.

\begin{thm}\label{main}
Let $R$ be a ring. Then, the involutory Cayley graph of $R$ is of genus $1$ if and only if one of the following conditions hold.
\begin{enumerate}
\item\label{main 1}
$R$ is isomorphic to $\mathbb{Z}_2[x,y]/\langle x^2,xy,y^2\rangle$, $\mathbb{Z}_4[x]/\langle x^2,2x\rangle$ or $\mathbb{Z}_{2^n}$ for some $n\geq 3$;

\item\label{main 2}
$R$ is isomorphic to $\mathbb{Z}_{p^n}\times \mathbb{Z}_{q^m}$ ,  $\mathbb{Z}_{p^n}\times \mathbb{Z}_{q^m}\times \mathbb{Z}_2$ , $\mathbb{Z}_{p^n}\times \mathbb{Z}_4$ or  $\mathbb{Z}_{p^n}\times \mathbb{Z}_2[x]/\langle x^2 \rangle $
for some odd prime numbers $p,q$ and positive integers $n,m$.
\end{enumerate}

In particular, $\G(R)$ is of genus $1$ if and only if it is connected and $4$-regular.
\end{thm}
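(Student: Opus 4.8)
The plan is to prove Theorem~\ref{main} by assembling the case analysis already completed in the preceding lemmas, using Lemma~\ref{nec} to reduce the genus-one condition to the combinatorial condition of being connected and $4$-regular, and then using the local/nonlocal classification lemmas to enumerate exactly the rings satisfying that condition. Concretely, for the forward direction I would assume $\G(R)$ has genus $1$ and invoke Lemma~\ref{nec} to obtain that $\G(R)$ is connected and $4$-regular. Then I would split into the nonlocal and local cases: if $R$ is nonlocal, Lemma~\ref{nonl} yields precisely the four families in item~\pref{main 2}; if $R$ is local, I would further split on the characteristic, which by the remarks in the introduction is a prime power $2^k$ (the odd-characteristic local case cannot occur, since by Theorem~\ref{imp}~\pref{imp 2} such a $\G(R)$ is a disjoint union of cycles, hence at most $2$-regular and not both connected and $4$-regular unless it is a single cycle, which is only $2$-regular). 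For characteristic $2$, Lemma~\ref{ch2} forces $R\simeq \mathbb{Z}_2[x,y]/\langle x^2,xy,y^2\rangle$; for characteristic $4$, Lemma~\ref{ch4} forces $R\simeq \mathbb{Z}_4[x]/\langle x^2,2x\rangle$; and for characteristic $2^n$ with $n\geq 3$, Lemma~\ref{ch2n} forces $R\simeq \mathbb{Z}_{2^n}$. Together these give exactly item~\pref{main 1}.

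For the converse direction, I would verify that each ring listed in \pref{main 1} and \pref{main 2} indeed has genus exactly $1$. The three rings in \pref{main 1} are handled directly: the two rings of order $8$ satisfy $\G(R)\simeq K_{4,4}$ (by Lemmas~\ref{ch2} and~\ref{ch4}), which has genus $1$; and for $\mathbb{Z}_{2^n}$ with $n\geq 3$, Lemma~\ref{ch2n} establishes genus $1$ via the explicit toroidal embedding sketched there and in Example~\ref{ex-zn}. For the nonlocal families in \pref{main 2}, I would first use Lemma~\ref{nonl} together with Proposition~\ref{conn} and Theorem~\ref{imp}~\pref{imp 4} to confirm that each $\G(R)$ is connected and $4$-regular, and then I need to identify its genus. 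The crucial point is the structural identification: by Theorem~\ref{imp}~\pref{imp 4}, $\G(R)$ is a direct product of the factors' graphs, and by Theorem~\ref{imp}~\pref{imp 2} each odd-order factor $\G(\mathbb{Z}_{p^n})$ is a single cycle $C_{p^n}$, while each small even factor ($\G(\mathbb{Z}_2)\simeq K_2$, and $\G(\mathbb{Z}_4)\simeq\G(\mathbb{Z}_2[x]/\langle x^2\rangle)\simeq C_4$) is accounted for similarly. Thus each such $\G(R)$ is either a direct product $C_m\otimes C_n$ of two cycles (possibly after noting $K_2\otimes C_n$ collapses appropriately) or reduces to such a product, whose genus is given by the cited result \cite[Corollary 3]{conj}.

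The main obstacle, and the step requiring the most care, is precisely pinning down the genus of the nonlocal graphs via the direct-product description so that I may apply \cite[Corollary 3]{conj}, which states that $g(C_m\otimes C_n)=2$ when both $m,n$ are even and $1$ otherwise. I must check that the relevant product always falls into the genus-$1$ regime rather than the genus-$2$ regime. For $\mathbb{Z}_{p^n}\times\mathbb{Z}_{q^m}$ the graph is $C_{p^n}\otimes C_{q^m}$ with both cycle lengths odd, giving genus $1$. For the families involving an even factor I must verify that multiplying by $K_2$ or a $C_4$ factor does not push the genus to $2$ or higher; the key observation is that $G\otimes K_2$ is the bipartite double cover of $G$, and that the even factors here are small enough that the resulting $4$-regular graph is still recognizable as a single $C_m\otimes C_n$ with at least one odd parameter, hence genus exactly $1$. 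Establishing these reductions carefully — in particular, that $\G(\mathbb{Z}_{p^n}\times\mathbb{Z}_{q^m}\times\mathbb{Z}_2)$, $\G(\mathbb{Z}_{p^n}\times\mathbb{Z}_4)$, and $\G(\mathbb{Z}_{p^n}\times\mathbb{Z}_2[x]/\langle x^2\rangle)$ each reduce to a two-cycle direct product with an odd factor — is the delicate part of the argument; once that identification is made, the genus value follows immediately from the cited corollary. The final clause of the theorem, that genus $1$ is equivalent to being connected and $4$-regular, is then immediate: Lemma~\ref{nec} gives one implication, and the converse follows because the complete enumeration above shows every connected $4$-regular $\G(R)$ appears in the list, each member of which has genus $1$.
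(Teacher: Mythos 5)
Your proposal is correct and takes essentially the same approach as the paper: the forward direction via Lemma~\ref{nec} followed by the nonlocal/local case lemmas, and the converse via the identifications $\G(R)\simeq K_{4,4}$ or $\G(R)\simeq C_{p^n}\otimes C_{q^m}$ (using $C_{q^m}\otimes K_2\simeq C_{2q^m}$ for the $\mathbb{Z}_2$ factor, and $\G(\mathbb{Z}_4)\simeq\G(\mathbb{Z}_2[x]/\langle x^2\rangle)\simeq C_4$) together with the cited genus formulas for $K_{m,n}$ and $C_m\otimes C_n$. Your reductions of the three even-factor families to a two-cycle product with an odd parameter are exactly the ones the paper carries out.
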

\begin{proof}
($\Rightarrow$) Since $\G(R)$ is of genus $1$, applying Lemma \ref{nec}, it is connected and $4$-regular. If $R$ is not local, by Lemma \ref{nonl}, we obtain the condition \pref{main 2}. Moreover, if $R$ is local, then by Theorem \ref{imp} \pref{imp 2}, we infer that $R$ is of even size. In this case, combining Lemmas \ref{ch2}, \ref{ch4} and \ref{ch2n}, we get the condition \pref{main 1}, obtaining the result.

($\Leftarrow$)
Keeping in mind Theorem \ref{imp}, it can be seen that
\begin{align*}
	\G(\mathbb{Z}_{p^n}\times \mathbb{Z}_{q^m})&\simeq C_{p^n}\otimes C_{q^m},\\
	\G(\mathbb{Z}_{p^n}\times \mathbb{Z}_4)&\simeq C_{p^n}\otimes C_4,\\
	\G(\mathbb{Z}_{p^n}\times \mathbb{Z}_2[x]/\langle x^2 \rangle )&\simeq C_{p^n}\otimes C_4 ,
\end{align*}
and 
$$ \G(\mathbb{Z}_{p^n}\times \mathbb{Z}_{q^m}\times \mathbb{Z}_2)\simeq C_{p^n}\otimes C_{q^m}\otimes K_2 \simeq C_{p^n}\otimes C_{2q^m}.$$
The genus of the direct product of two cycles is known to be $2$ if both cycles  have even length and $1$ otherwise (see \cite[Corollary 3]{conj}). Hence, all four of the above graphs have genus $1$.

Moreover,
one could obeserve that if $R$ is isomorphic to $\mathbb{Z}_2[x,y]/ \langle x^2,xy,y^2\rangle $
or
$\mathbb{Z}_4[x]/\langle x^2,2x\rangle$, then 
$\G(R)\simeq K_{4,4}$. The genus of complete bipartite graphs can be calculated by the well-known formula
$$ g(K_{m,n})= \left\lceil\dfrac{(m-2)(n-2)}{4}\right\rceil$$
for $m,n\geq 2$
(see, for example, \cite[Theorem 6.37]{white}). Therefore, the genus of $K_{4,4}$ is equal to $1$, as required. Also, if $R\simeq \mathbb{Z}_{2^n}$, the result follows from Lemma \ref{ch2n} and we are done.

The "In particular" statement  follows from Lemmas \ref{nec}, \ref{nonl}, \ref{ch2}, \ref{ch4} and \ref{ch2n}.
\end{proof}

To conclude, we propose some problems for further research.
Recalling Proposition \ref{conn}, we have a characterization of connected involutory Cayley graphs. However, to obtain a full classification, one must classify all local rings of even size whose involutory Cayley graphs are connected. In our study in \cite{inv} and during the preparation of this paper, we were unable to resolve this problem. Therefore, one possible direction is to classify all connected involutory Cayley graphs.

Another direction is to investigate other important graph-theoretic properties, such as spectral properties. Additionally, it would be interesting to find necessary and sufficient conditions for two rings $R$ and $S$ such that  $\G(R)\simeq \G(S)$.

\end{document}